\documentclass[11pt,reqno,oneside]{amsart}
\usepackage{amsmath}
\usepackage{amssymb}
\usepackage{amsthm}
\usepackage{framed}
\usepackage{color}
\usepackage{xifthen}
\usepackage{graphicx}
\usepackage{tikz}

\newtheorem{theorem}{Theorem}
\newtheorem{lemma}[theorem]{Lemma}

\newtheorem{proposition}[theorem]{Proposition}

\theoremstyle{definition}
\newtheorem{definition}[theorem]{Definition}
\newtheorem{property}[theorem]{Property}
\newtheorem{example}[theorem]{Example}

\newcommand{\real}{\mathbb{R}}
\newcommand{\sets}{\mathcal{S}}
\newcommand{\euc}{\mathbb{E}}
\newcommand{\ad}{\mathcal{A}}	
\newcommand{\id}{\mathrm{id}}
\newcommand{\tr}{\mathrm{tr}}

\addtolength{\footskip}{8pt}
\addtolength{\textwidth}{40pt}
\addtolength{\hoffset}{-20pt}


\def\verbose{0}
\def\comment#1{\ifthenelse{\verbose=1}{\textcolor{red}{\emph{#1}}}{}}
\def\sidenote#1{\ifthenelse{\verbose=1}{\marginpar{\raggedright{\textcolor{red}{#1}}}}{}}

\begin{document}

\title{A Hadwiger Theorem for Simplicial Maps}

\date{\today}

\author{P. Christopher Staecker}
\address{Department of Mathematics, Fairfield University, Fairfield CT, USA}
\email{cstaecker@fairfield.edu}

\author{Matthew L. Wright}
\address{Institute for Mathematics and its Applications, University of Minnesota, Minneapolis MN, USA}
\email{mlwright@ima.umn.edu}

\begin{abstract}
	We define the notion of \emph{valuation} on simplicial maps between geometric realizations of simplicial complexes in $\real^n$.
	Valuations on simplicial maps are analogous to valuations on sets.
    In particular, we define the Lefschetz volumes, which are analogous to the intrinsic volumes of subsets of $\real^n$.
	Our definition not only provides a generalization of the Lefschetz number, but also yields a Hadwiger-style classification theorem for all such valuations.
\end{abstract}

\subjclass[2010]{52B45, 55M20}

\keywords{valuation, simplicial map, Lefschetz number, intrinsic volume, Euler characteristic, Hadwiger's theorem}

\maketitle

\section{Introduction}

The Euler characteristic of a polyhedron (a geometric realization of a simplicial complex) can be generalized to the \emph{Lefschetz number} of a selfmap of a polyhedron. 
The Lefschetz number $L(f)$ is a classical homological invariant which in some sense gives an oriented count of the number of fixed points of the selfmap; see the text by Brown \cite{Brown}. In particular, the Lefschetz number of the identity map equals the Euler characteristic.

A theme in topological fixed-point theory has been to consider properties of the Euler characteristic of a space and generalize to the Lefschetz number of selfmaps. A notable recent result is the paper of Arkowitz and Brown \cite{AB}, which gives axioms which uniquely define the Lefschetz number. Arkowitz and Brown's axioms directly generalize earlier axioms for the Euler characteristic by Watts in \cite{Watts}.

A simple consequence of the classic Hadwiger theorem is that the Euler characteristic is the unique valuation which has value 1 on any compact convex set. Recent work by the first author in \cite{Staecker} generalizes this fact to the Lefschetz number. The main result of \cite{Staecker} is that the Lefschetz number defined for selfmaps on polyhedra is unique with respect to the valuation (additivity) property, along with a specification of the value on simplices. That work considers only the Euler characteristic, which in Hadwiger's context is the dimension zero intrinsic volume.

The goal of this paper is to obtain an analogue of the full Hadwiger theorem in all dimensions in the setting of simplicial self-maps on polyhedra. The characterization of valuations in this context includes the Lefschetz number as the dimension zero case, together with a set of higher-dimensional valuations. These other valuations do not seem to appear in the fixed point theory literature, but nonetheless give meaningful information about the fixed point set. In particular they all satisfy a Lefschetz fixed point theorem.
Furthermore, our valuations on simplicial maps are analogous to the valuations on real-valued functions described in \cite{BGW}.

\section{Background}

Classically, \emph{valuations} have been studied on subsets of $n$-dimensional Euclidean space, $\real^n$.
A valuation is a way of assigning a notion of size to sets \cite{Schanuel}.
More formally, a valuation on a collection $\sets$ of subsets of $\real^n$ is an additive function $v: \sets \to \real$, meaning that
\begin{equation}\label{eq:additive}
	v(A) + v(B) = v(A \cap B) + v(A \cup B)
\end{equation}
whenever $A, B, A \cap B, A \cup B \in \sets$, such that $v(\emptyset)=0$.
Valuation $v$ is \emph{Euclidean-invariant} if $v(\phi A) = v(A)$ for all $A \in \sets$ and $\phi \in \euc_n$, the group of Euclidean motions (isometries) on $\real^n$.
A valuation is \emph{continuous} on $\sets$ if it is continuous with respect to convergence of sets in a specified metric (generally the Hausdorff metric).

The intrinsic volumes\footnote{Intrinsic volumes known by various names, including \emph{Hadwiger measures}, \emph{Minkowski functionals}, and \emph{quermassintegrale}. These concepts are equivalent up to normalization.} $\mu_0, \mu_1, \ldots, \mu_n$ are Euclidean-invariant valuations commonly defined on polyconvex subsets of $\real^n$ (a \emph{polyconvex} set is a finite union of compact convex sets).
The valuation $\mu_0$ is Euler characteristic, and $\mu_n$ is Euclidean $n$-dimensional volume.
Intuitively, $\mu_k(A)$ gives a notion of the $k$-dimensional size of a set $A$, so $\mu_1$ measures length or perimeter, $\mu_2$ measures area, etc.
Formally, the intrinsic volumes are often defined via the \emph{Steiner formula}.
For any convex set $A \subset\real^n$, let $A_\rho$ denote the volume of the ``tube'' around $A$ of radius $\rho > 0$; that is,
\begin{equation*}
	A_\rho = \{ x \in \real^n \mid d(x,A) \le \rho \}.
\end{equation*}
The Steiner formula states that the volume of $A_\rho$ is a polynomial in $\rho$, whose coefficients involve the intrinsic volumes:
\begin{equation}\label{eq:Steiner}
	\mathrm{Vol}(A_\rho) = \sum_{k=0}^n \rho^{n-k} \omega_{n-k} \mu_k(A),
\end{equation}
where $\omega_{j}$ is the volume of the $j$-dimensional unit ball.
For a proof of the Steiner formula, see Schneider \cite{Schneider}.

The well-known Hadwiger theorem states that the space of Euclidean-invariant valuations on polyconvex sets in $\real^n$, continuous on convex sets with respect to the Hausdorff metric, is a vector space spanned by the intrinsic volumes \cite{Hadwiger, KR}.
\begin{theorem}[Hadwiger]
	Any Euclidean-invariant convex-continuous valuation $v$ on compact polyconvex subsets of $\real^n$ is a linear combination of the intrinsic volumes:
	\begin{equation}\label{eq:Hadwiger}
		v = \sum_{k=0}^n a_k \mu_k,
	\end{equation}
	for some constants $a_k \in \real$.
\end{theorem}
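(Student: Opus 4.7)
The plan is to proceed by induction on the ambient dimension $n$, with the base case $n=0$ being immediate since a compact polyconvex subset of $\real^0$ is either empty or a point, so $v=v(\{\mathrm{pt}\})\mu_0$. A preliminary reduction is from polyconvex sets to convex bodies: repeated application of the additivity identity (\ref{eq:additive}) together with inclusion-exclusion shows that any valuation on compact polyconvex sets is determined by its values on compact convex sets, so it suffices to prove the formula on convex bodies and extend by inclusion-exclusion.

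For the inductive step, I would restrict $v$ to compact convex subsets lying in a fixed hyperplane $H\subset\real^n$ with $H\cong\real^{n-1}$. Because $v$ is Euclidean-invariant in $\real^n$ and convex-continuous, its restriction is a Euclidean-invariant convex-continuous valuation on convex bodies in $H$, so by the inductive hypothesis it coincides with $\sum_{k=0}^{n-1}a_k\mu_k^H$ for some constants $a_k\in\real$. Euclidean invariance forces these constants to be independent of the chosen hyperplane, and the intrinsic volumes of a lower-dimensional body are truly intrinsic (they do not depend on whether one views the body as a subset of $H$ or of $\real^n$), so one may subtract $\sum_{k=0}^{n-1}a_k\mu_k$ from $v$ to obtain a new valuation $v'$ which vanishes on every convex body of dimension strictly less than $n$. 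Such a $v'$ is called \emph{simple}.

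The main step is then Hadwiger's volume theorem: any simple, Euclidean-invariant, convex-continuous valuation on compact convex subsets of $\real^n$ is a constant multiple of $\mu_n$. Granted this, $v'=a_n\mu_n$ and combining with the previous paragraph yields the desired decomposition $v=\sum_{k=0}^n a_k\mu_k$. To prove the volume theorem one establishes that $v'$ is additive under axis-aligned subdivisions of a rectangular box (the lower-dimensional intersections of adjacent pieces contribute nothing because $v'$ is simple), so $v'$ agrees with a constant multiple of Lebesgue measure on finite unions of axis-aligned boxes; rotational invariance extends this identification to boxes of arbitrary orientation, and continuity extends it from boxes to arbitrary convex bodies via Hausdorff approximation.

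The hard part is the volume theorem. The induction on $n$, the polyconvex-to-convex reduction, and the subtraction of lower intrinsic volumes are all formal once set up, but proving that a simple invariant continuous valuation on convex bodies must equal a constant times volume requires genuine geometric input: the cube argument only handles parallelepipeds directly, and transferring the conclusion to general convex bodies relies in an essential way on the interplay between rotational invariance and continuity. A modern alternative is Klain's theorem, which characterizes simple valuations by their restrictions to top-dimensional simplices and reduces the volume theorem to a one-variable identification, but this approach still carries the same geometric content and is the real substance of the argument.
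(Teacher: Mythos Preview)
The paper does not prove this theorem. It is stated as the classical Hadwiger theorem and simply cited to \cite{Hadwiger, KR}; no argument is given, and the paper immediately moves on to its own setting of simplicial complexes. So there is no ``paper's own proof'' to compare your proposal against.

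For what it is worth, your outline is the standard modern route (essentially Klain's proof, which is the one presented in the Klain--Rota reference the paper cites): induct on $n$, restrict to a hyperplane to pin down $a_0,\dots,a_{n-1}$, subtract to reduce to a simple valuation, and then invoke the volume theorem. Your assessment of where the real work lies---the volume theorem for simple invariant continuous valuations---is accurate. One small caution: the passage ``it suffices to prove the formula on convex bodies and extend by inclusion-exclusion'' hides a nontrivial step (Groemer's extension theorem), since one must check that the convex-body values are consistent under all inclusion-exclusion identities before they determine a well-defined valuation on polyconvex sets; but this is orthogonal to anything in the present paper.
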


Our work will be in the setting of abstract finite simplicial complexes and their geometric realizations. Let $S$ be a finite set, and $P(S)$ the power set. A \emph{simplicial complex} (or simply \emph{complex}) is a subset $X\subset P(S)$ such that if $x \in X$ and $y\subseteq x$ then $y \in X$. A complex containing exactly one maximal element is a simplex. A one-element subset of $P(S)$ is called an \emph{open simplex}.

For any $a\in P(S)$, let $\bar a \subset P(S)$ be the simplex whose maximal element is $a$. For a simplex $\bar a$, the \emph{boundary} $\partial \bar a$ is the complex $\bar a - \{a\}$. The one-element set $\{a\}$ is the \emph{interior} of $\bar a$, denoted $\mathring a$. Let a \emph{polyhedron} refer to a geometric realization of a simplicial complex. 
When speaking of valuations, we sometimes use the terms \emph{simplicial complex} and \emph{polyhedron} interchangeably; in this paper, we always pair a simplicial complex with a geometric realization, and a polyhedron always comes with a simplicial decomposition. 
Since the original set $S$ is assumed to be finite, our polyhedra will always be compact.

Thus polyhedra are polyconvex sets, and intrinsic volumes of polyhedra are well-defined and well-studied in the literature.
Less commonly known is that any valuation on polyhedra induces a valuation on their interiors. 
If $v$ is any valuation on polyhedra, then we can extend $v$ to geometric realizations of open simplices as follows.
Let $\bar{x}$ denote a (closed, geometric) simplex.
Let $\mathring{x}$ denote the interior of $\bar{x}$ and $\partial \bar{x}$ denote the boundary of $\bar{x}$.
A closed simplex is equal to the \emph{disjoint} union of its interior and its boundary; that is, $\bar{x} = \mathring{x} \sqcup \partial \bar{x}$.
Furthermore, the boundary $\partial \bar{x}$ is itself a simplicial complex.
Then we can define
\begin{equation}\label{eq:val_open}
	v(\mathring{x}) = v(\bar{x}) - v(\partial \bar{x}).
\end{equation}
This allows us to write the valuation of a simplicial complex as a sum over all open simplices.

\begin{lemma}\label{lem:open_decomp}
	Let $v$ be a valuation on polyhedra, and let $X$ be a polyhedron.
    Then
    \begin{equation}\label{eq:open_decomp}
    	v(X) = \sum_{a \subseteq X} v(\mathring{a}),
    \end{equation}
    where the sum is over all simplices, of all dimensions, in $X$. 
\end{lemma}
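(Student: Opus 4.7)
The plan is to prove equation \eqref{eq:open_decomp} by induction on the number of simplices in $X$.

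For the base case, if $X$ is empty then both sides are $0$ (using $v(\emptyset)=0$ and the convention that the empty sum is $0$). Alternatively, one can start from the case where $X$ consists of a single vertex $v$: then $\partial\bar v = \emptyset$, so $v(\mathring v) = v(\bar v) - v(\emptyset) = v(\bar v) = v(X)$, and the sum has only this one term.

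For the inductive step, suppose $X$ has $n \ge 1$ simplices and the result holds for all polyhedra with fewer simplices. Choose a simplex $a \in X$ that is \emph{maximal} with respect to inclusion (no simplex of $X$ properly contains $a$). Let $Y$ be the subcomplex of $X$ obtained by deleting $a$ itself, so $Y = X \setminus \{a\}$. Since $a$ was maximal, $Y$ is still downward-closed and hence a simplicial complex, whose geometric realization is a polyhedron. Observe that $Y \cup \bar a = X$ and $Y \cap \bar a = \partial \bar a$, both of which are polyhedra. Applying additivity \eqref{eq:additive} to $Y$ and $\bar a$ yields
\begin{equation*}
    v(X) = v(Y \cup \bar a) = v(Y) + v(\bar a) - v(Y \cap \bar a) = v(Y) + v(\bar a) - v(\partial \bar a) = v(Y) + v(\mathring a),
\end{equation*}
where the last equality uses the definition \eqref{eq:val_open}. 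The inductive hypothesis applied to $Y$, which has $n-1$ simplices, gives $v(Y) = \sum_{b \subseteq Y} v(\mathring b)$. Since the simplices of $Y$ together with $a$ account for exactly the simplices of $X$, adding the term $v(\mathring a)$ yields $v(X) = \sum_{b \subseteq X} v(\mathring b)$, completing the induction.

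The only mild obstacle is bookkeeping: one must verify that the decomposition $X = Y \cup \bar a$ is admissible for the additivity axiom, which requires that $Y$, $\bar a$, their union, and their intersection all be polyhedra. Maximality of $a$ is what guarantees this — without it, removing $a$ from $X$ would leave a set that is neither a subcomplex nor a closed subset, and additivity would not apply directly.
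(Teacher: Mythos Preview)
Your proof is correct and follows essentially the same approach as the paper: induct on the number of simplices, peel off a maximal simplex $a$, and use additivity with $Y = X \setminus \{a\}$ and $Y \cap \bar a = \partial \bar a$. The paper phrases it slightly differently by introducing the auxiliary valuation $w(X) = \sum_{a \subseteq X} v(\mathring a)$ and showing $v = w$, but the underlying induction is identical.
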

(Throughout the paper, summations indexed by subsets of a complex, as in \eqref{eq:open_decomp}, will be assumed to range over only subsimplices of the complex.)

\begin{proof}
Let $w(X) = \sum_{a \subseteq X} v(\mathring a) = \sum_{a \subseteq X} v(a) - v(\partial a)$. Since $v$ is a valuation, we see that $w$ is also a valuation. We show that $w(X) = v(X)$ by induction on the number of simplices of $X$.

For the base case, when $X$ is empty we have $w(X) = v(X) = 0$.  For the inductive case, assume that $X$ has $k+1$ simplices, and let $a \subset X$ be a maximal simplex and $Y$ a complex of $k$ simplices with $X = a \cup Y$. Then by additivity of $v$ and $w$ and the induction hypothesis we have
\[ v(X) = v(a \cup Y) = v(a) + v(Y) - v(a\cap Y) = w(a) + w(Y) - w(a\cap Y) = w(X). \]
\end{proof}

Specifically, the intrinsic volumes of geometric realizations of open simplices are related to the intrinsic volumes of their closures as follows.
\begin{proposition}\label{prop:int_vol_open}
	Let $x$ be a geometric realization of a closed $n$-simplex and $\mathring{x}$ be its interior.
    Then
    \begin{equation}\label{eq:int_vol_open}
    	\mu_k(\mathring{x}) = (-1)^{n-k} \mu_k(x),
    \end{equation}
    for any $k \in \{ 0, 1, \ldots, n\}$.
\end{proposition}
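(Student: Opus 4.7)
The plan is to induct on $n = \dim x$, combining equation~\eqref{eq:val_open} with Lemma~\ref{lem:open_decomp} and a classical Gram--Euler relation for intrinsic volumes. The base case $n = 0$ is immediate: $\bar x$ is a point, $\partial\bar x = \emptyset$, so $\mathring x = \bar x$ and $\mu_0(\mathring x) = 1 = (-1)^0 \mu_0(\bar x)$, while $\mu_k \equiv 0$ on a point for $k \geq 1$.

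For the inductive step, assume the identity for all simplices of dimension less than $n$. Applying~\eqref{eq:val_open} and then Lemma~\ref{lem:open_decomp} to the subcomplex $\partial \bar x$ gives
\[
\mu_k(\mathring x) = \mu_k(\bar x) - \sum_{F \subsetneq \bar x} \mu_k(\mathring F),
\]
summed over proper non-empty faces of $\bar x$. Each such $F$ has $\dim F < n$, so the inductive hypothesis rewrites $\mu_k(\mathring F) = (-1)^{\dim F - k}\mu_k(\bar F)$. Substituting and collecting the $F = \bar x$ term into the sum, the target identity $\mu_k(\mathring x) = (-1)^{n-k}\mu_k(\bar x)$ becomes equivalent to
\[
\sum_{F \subseteq \bar x}(-1)^{\dim F}\mu_k(\bar F) = (-1)^k \mu_k(\bar x),
\]
with $F$ now ranging over all non-empty faces of $\bar x$ including $\bar x$ itself.

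This last identity is a classical Gram--Euler relation for intrinsic volumes of a convex polytope; its $k = 0$ specialization is just $\chi(\bar x) = 1$. The cleanest proof I know substitutes the external-angle expansion $\mu_k(\bar F) = \sum_{G \subseteq F,\ \dim G = k} \gamma(G, F)\,\mathrm{Vol}_k(G)$, swaps the order of summation, and applies Gram's alternating-sum identity on external angles of the faces of $\bar x$ containing a fixed $k$-face $G$. I would cite this from Schneider or McMullen rather than reprove it in the paper.

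The main obstacle is precisely this Gram--Euler step. If it is taken as a known ingredient, the induction reduces to a few lines of bookkeeping; a genuinely self-contained derivation, however, would require setting up the angle-sum machinery for polytopes, which is nontrivial.
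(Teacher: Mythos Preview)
Your reduction is correct: the induction together with Lemma~\ref{lem:open_decomp} cleanly reduces the proposition to the Gram--Euler identity
\[
\sum_{\emptyset \ne F \subseteq \bar x}(-1)^{\dim F}\mu_k(\bar F) = (-1)^k \mu_k(\bar x),
\]
which is indeed a known relation for convex polytopes (and checks out in the extreme cases $k=0$, $k=n-1$, $k=n$). So as a proof-by-citation your argument is sound.

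The paper takes a different route. It does not attempt a uniform argument: it proves only the case $k=0$ directly, using that $\partial x$ is an $(n-1)$-sphere with $\chi(\partial x)=1+(-1)^{n-1}$, and for $k\ge 1$ it simply cites the integral-geometric argument in~\cite{BGW}. Thus both approaches outsource the genuinely hard step, but to different pieces of literature: yours to the Gram--Euler/angle-sum machinery for polytopes, the paper's to an integral-geometric treatment of intrinsic volumes. Your approach has the virtue of making the combinatorial structure explicit and isolating exactly which polytope identity is doing the work; the paper's approach is shorter on the page but more opaque about what is actually being invoked. One minor remark: your induction is real but slightly cosmetic, since the Gram--Euler relation you need at step~$n$ is already the full-strength statement for the $n$-simplex and is not itself reduced to lower dimensions by your argument.
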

\begin{proof}
We will prove the $k=0$ case, which is easily accessible with combinatorial arguments. A full proof for all $k$ appears in \cite{BGW} (see Remark 5 in that paper) using an integral-geometric approach to the intrinsic volumes. 

In the case for $k=0$, the intrinsic volume $\mu_0$ is the Euler characteristic $\chi$. When $x$ is a simplex we have $\chi(x) = 1$ and so the statement to be proved is that $\chi(\mathring x) = (-1)^n$. But $\chi(\mathring x) = \chi(x) - \chi(\partial x) = 1 - \chi(\partial x)$. Since $\partial x$ is topologically a sphere of dimension $n-1$, its Euler characteristic is $1+(-1)^{n-1}$. Thus we have $\chi(\mathring x) = 1 - (1 + (-1)^{n-1}) = (-1)^n$ as desired.
\end{proof}

The Euler characteristic $\mu_0$ resulting from Proposition \ref{prop:int_vol_open} is the \emph{combinatorial} Euler characteristic, which takes value $(-1)^n$ on an open $n$-simplex.
This Euler characteristic is unfortunately not a homotopy-type invariant, but it is a homeomorphism invariant and appears in the literature \cite{BG:pnas, BG:siam, VDD}.

\section{Valuations on Simplicial Maps}

Let $\ad$ be the set of \emph{admissible pairs} $(f, X)$, where $X$ is a polyhedron in $\real^n$ and $f: X \to X$ is a simplicial map (taking simplices to simplices).

We define valuations on admissible pairs as follows:

\begin{definition}
	A \emph{valuation} on admissible pairs is an additive function $v: \ad \to \real$ such that $v(f, \emptyset) = 0$. By \emph{additive}, we mean that $v(f,X) + v(f,Y) = v(f,X \cap Y) + v(f,X \cup Y)$ for any polyhedra $X$ and $Y$ in $\real^n$ such that $f$ is a simplicial map on each of $X$ and $Y$.
\end{definition}

Just as any valuation on polyhedra induces a valuation on open simplices (equation \eqref{eq:val_open}), any valuation on admissible pairs extends in a natural way to open simplices. Suppose that $x \subset X$ is a simplex and $f: x \to X$ is a simplicial map.
Then we define
\begin{equation}\label{eq:open_closed}
	v(f,\mathring{x}) = v(f,\bar{x}) - v(f, \partial \bar{x}),
\end{equation}
where $\mathring{x}$ refers to the interior of $\bar{x}$ as before.


We obtain a useful decomposition of any valuation on admissible pairs as a sum of values on open simplices.
For any admissible pair $(f,X)$ and valuation $v$, 
\begin{equation}\label{eq:open_sum}
    	v(f,X) = \sum_{x \subseteq X} v(f,\mathring{x}) = \sum_{x \subseteq X} \left( v(f,\bar{x}) - v(f, \partial \bar{x})\right)
    \end{equation}
The proof of equation \eqref{eq:open_sum} is essentially the same as that of Lemma \ref{lem:open_decomp}.

    
    
    

We highlight the following properties, which a valuation $v$ on admissible pairs may satisfy:

\begin{property}[Simplex]
	Valuation $v$ satisfies the \emph{simplex} property if
    \begin{equation}\label{eq:simplex}
    	v(f,x) = c(f,x)v(\id, \mathring{x}) + v(f, \partial x)
    \end{equation}
   	for any (closed) simplex $x \in X$ with interior $\mathring{x}$, where 
	\begin{equation}\label{eq:norm}
        c(f,x) = 
        \begin{cases}0 & \text{if } x \ne f(x) \\ 1 & 
         	\text{if } x = f(x) \text{ preserving orientation} \\ -1 &
           	\text{if } x = f(x) \text{ reversing orientation.}
        \end{cases}
    \end{equation}
Compare this with a similar property of the Lefschetz number in \cite{Staecker}.
By equation \eqref{eq:open_closed}, this property is equivalent to the statement $v(f,\mathring{x}) = c(f,x) v(\id,\mathring{x})$.
\end{property}

\begin{property}[Invariance]
	Valuation $v$ satisfies the \emph{invariance} property if 
    $v(f,X) = v(\phi \circ f \circ \phi^{-1}, \phi(X))$ 
    for any isometry $\phi$ of $\real^n$ and any polyhedron $X$.
    This means that if $v$ is an invariant valuation, then $v(f,X)$ does not depend on the particular location or orientation of $X$ in $\real^n$.
\end{property}

\begin{property}[Continuity]
	Valuation $v$ is \emph{continuous} if $v(\id, X_i)$ converges to $v(\id, X)$ for any sequence of convex polyhedra $(X_i)_i$ that converges in the Hausdorff metric to a convex polyhedron $X$.
\end{property}
    


\section{Lefschetz Volumes}

We highlight a class of valuations on admissible pairs.
For an admissible pair $(f, X)$ with $X \subset \real^n$, and for any $k \in \{0, 1, \ldots, n \}$ we define the \emph{$k$-dimensional Lefschetz volume} $v_k$:
\begin{equation}\label{eq:lefschetzvolume}
	v_k(f, X) = \sum_{x \subseteq X} (-1)^{\dim(x)-k} c(f,x) \mu_k(x),
\end{equation}
where the sum is over all simplices $x \subseteq X$, and $c(f,x)$ is given in \eqref{eq:norm}. In order to make sense of $\mu_k(x)$ for $k > 0$, we recall that $\real^n$ is equipped with the Euclidean metric.

Viewed in terms of the chain maps $f_q$, the number $c(f,x)$ is the coefficient on $x$ in the chain $f_q(x)$.
Thus the trace $\tr(f_q)$ equals the sum $\sum_{x\in C_q(X)}c(f,x)$. Let $V_{q,k}$ be the diagonal matrix consisting of the intrinsic volumes $\mu_k(x)$ for each $q$-simplex $x$. Then we have the following trace formula for the Lefschetz volumes:
\begin{equation}\label{eq:trace}
	v_k(f,X) = \sum_{q=0}^{\dim X} (-1)^{q - k}\tr(f_q V_{q,k}).
\end{equation}

The intrinsic volumes are higher-dimensional analogues of the Euler characteristic which are geometric rather than purely topological. In the same way, the Lefschetz volumes are higher-dimensional analogues of the Lefschetz number, which is itself a generalization of the Euler characteristic. 

Observe that setting the ``dimension'' $k$ to zero in \eqref{eq:trace} makes $V_{q,0}$ equal to the identity matrix, since the dimension zero intrinsic volume is the Euler characteristic, which is 1 for any simplex. Thus we have 
\[ v_0(f,X) = \sum_{q=0}^{\dim X} (-1)^q \tr f_q, \]
which is the classical Lefschetz number of $f$.

Observe also that if we use the identity map $\id$ for $f$ in \eqref{eq:lefschetzvolume} we have $c(\id,x) = 1$ for all $x$, and thus
\begin{equation}\label{eq:vk_id}
	v_k(\id, X) = \sum_{x \subseteq X} (-1)^{\dim (x) - k} \mu_k(x) = \sum_{x \subseteq X} \mu_k(\mathring x) = \mu_k(X).
\end{equation}

Thus, the Lefschetz volumes simultaneously generalize the Lefschetz number and the intrinsic volumes. The real number $v_k(f,X)$ may be thought of as a ``signed'' dimension-$k$ intrinsic volume of those simplices fixed by $f$.

The Lefschetz volumes maintain the most notable characteristic of the classical Lefschetz number, the following fixed point property:
\begin{theorem}
For any $k$, if $v_k(f, X) \ne 0$, then $f$ has a fixed point in a simplex of $X$ with dimension at least $k$.
\end{theorem}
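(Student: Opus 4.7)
The plan is to exploit the vanishing of $\mu_k$ on low-dimensional simplices so that only simplices of dimension at least $k$ contribute to $v_k(f,X)$, and then to locate an actual fixed point inside any such simplex that $f$ maps to itself.

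First, I would recall the classical fact that for any convex set $A$ of dimension less than $k$, the intrinsic volume $\mu_k(A) = 0$; this follows from the Steiner formula \eqref{eq:Steiner} since such an $A$ has no $k$-dimensional content. Applied to the defining sum
\[ v_k(f,X) = \sum_{x \subseteq X} (-1)^{\dim(x)-k} c(f,x)\mu_k(x), \]
every term with $\dim(x) < k$ vanishes, and the sum is effectively over simplices of dimension at least $k$. (The case $k=0$ reduces to the classical Lefschetz fixed point theorem, and the conclusion ``dimension at least zero'' is automatic once any fixed point is located.) Consequently, if $v_k(f,X) \ne 0$, some summand must be nonzero, yielding a simplex $x \subseteq X$ with $\dim(x) \ge k$ and $c(f,x) \ne 0$.

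By the definition of $c$ in \eqref{eq:norm}, $c(f,x) \ne 0$ means $f(x) = x$ as a simplex --- equivalently, $f$ permutes the vertex set of $x$. Since $f$ is simplicial and hence affine on $x$, the barycenter of $x$, being the centroid of the vertex set and invariant under any permutation of it, is a fixed point of $f$ lying in a simplex of dimension at least $k$. The only nonobvious ingredient in the argument is the vanishing $\mu_k(A) = 0$ for $\dim(A) < k$; everything else unpacks directly from the definitions, and this vanishing is precisely what endows the theorem with its dimensional content.
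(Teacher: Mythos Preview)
Your proof is correct and follows essentially the same approach as the paper: from the nonvanishing of the sum you extract a simplex $x$ with $c(f,x)\neq 0$ and $\mu_k(x)\neq 0$ (hence $\dim x \ge k$), and then locate a fixed point of $f$ inside $x$. The only difference is that the paper invokes the Brouwer fixed point theorem on $x$, whereas your barycenter argument is more elementary and even names the fixed point explicitly.
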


\begin{proof}
If $v_k(f, X) \ne 0$, then there is some simplex $x \subset X$ with $c(f,x) \ne 0$ and $\mu_k(x) \neq 0$. Since $c(f,x)$ is nonzero, $f$ maps $x$ to itself. By the Brouwer fixed point theorem, $f$ has a fixed point in $x$. Since $\mu_k(x)$, the dimension $k$ intrinsic volume, is nonzero, $x$ must be a simplex of at least dimension $k$.
\end{proof}

Furthermore, the Lefschetz volumes satisfy each of the properties defined in the previous section.

\begin{proposition}
	The Lefschetz volumes are invariant, continuous valuations on admissible pairs that satisfy the simplex property.
\end{proposition}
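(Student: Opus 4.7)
The plan is to verify the four claims (additivity with the empty-set normalization, the simplex property, isometry invariance, and continuity on convex polyhedra) one at a time, using only the definition \eqref{eq:lefschetzvolume}, the computation \eqref{eq:vk_id}, and Proposition \ref{prop:int_vol_open}. No deep input is required; the main task is careful bookkeeping of the sums.

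First I would dispatch the valuation axioms. Clearly $v_k(f,\emptyset)=0$ because the defining sum is empty. For additivity, fix polyhedra $X$ and $Y$ on which $f$ is simplicial, and write
\[
v_k(f,X)+v_k(f,Y)=\sum_{x\subseteq X}\!\alpha(x)+\sum_{x\subseteq Y}\!\alpha(x),
\qquad \alpha(x):=(-1)^{\dim(x)-k}c(f,x)\mu_k(x).
\]
Every simplex in $X\cup Y$ lies in exactly one of $X\setminus Y$, $Y\setminus X$, or $X\cap Y$, so the left-hand side counts each simplex of $X\cap Y$ twice and each simplex in the symmetric difference once, which is precisely $\sum_{x\subseteq X\cap Y}\alpha(x)+\sum_{x\subseteq X\cup Y}\alpha(x)=v_k(f,X\cap Y)+v_k(f,X\cup Y)$.

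Next comes the simplex property. For a closed simplex $x$, splitting off the top-dimensional term in \eqref{eq:lefschetzvolume} gives
\[
v_k(f,x)-v_k(f,\partial x)=(-1)^{\dim(x)-k}c(f,x)\mu_k(x).
\]
By \eqref{eq:vk_id}, $v_k(\id,\bar x)=\mu_k(\bar x)$ and $v_k(\id,\partial\bar x)=\mu_k(\partial\bar x)$, so \eqref{eq:open_closed} gives $v_k(\id,\mathring x)=\mu_k(\mathring x)$. Proposition~\ref{prop:int_vol_open} then shows $\mu_k(\mathring x)=(-1)^{\dim(x)-k}\mu_k(x)$, so the displayed line equals $c(f,x)\,v_k(\id,\mathring x)$, which is exactly \eqref{eq:simplex}.

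Finally I would handle invariance and continuity together. For an isometry $\phi$ of $\real^n$, the correspondence $x\mapsto\phi(x)$ is a bijection between the simplices of $X$ and those of $\phi(X)$; since $\phi\circ f\circ\phi^{-1}$ fixes $\phi(x)$ with a given orientation exactly when $f$ fixes $x$ with the same orientation, we have $c(\phi f\phi^{-1},\phi(x))=c(f,x)$, and the Euclidean invariance of $\mu_k$ preserves $\mu_k(\phi(x))=\mu_k(x)$ and $\dim\phi(x)=\dim x$, so \eqref{eq:lefschetzvolume} is unchanged. For continuity on the identity map, \eqref{eq:vk_id} gives $v_k(\id,X)=\mu_k(X)$, and the classical Hadwiger continuity of intrinsic volumes on convex bodies in the Hausdorff metric finishes the proof. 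There is no real obstacle; the only step requiring care is the sign reconciliation in the simplex property, which is precisely where Proposition~\ref{prop:int_vol_open} is used.
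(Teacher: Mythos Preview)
Your proof is correct and, for additivity, the simplex property, and continuity, follows essentially the same route as the paper (split off the top-dimensional term, identify $v_k(\id,\mathring x)$ with $\mu_k(\mathring x)$ via \eqref{eq:vk_id}, and invoke Proposition~\ref{prop:int_vol_open} for the sign; continuity reduces to that of $\mu_k$).

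The one genuine difference is invariance. You argue directly on the defining sum \eqref{eq:lefschetzvolume}: the bijection $x\mapsto\phi(x)$ preserves $\dim$, $\mu_k$ (Euclidean invariance), and $c$ (conjugate permutations have the same sign), so the sum is unchanged term by term. The paper instead passes through the trace formula \eqref{eq:trace}, writes $(\phi f\phi^{-1})_q=\phi_q f_q\phi_q^{-1}$, observes that the diagonal matrix of intrinsic volumes is unchanged under the isometry, and then uses similarity invariance of the trace. Your argument is more elementary and self-contained; the paper's version has the minor advantage of illustrating the trace description \eqref{eq:trace} in action, but both are short and valid.
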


\begin{proof}
	Additivity of $v_k$ follows from the fact that any polyhedron can be decomposed into a disjoint union of open simplices; thus $v_k$ is a valuation on admissible pairs.
    
    We now show that $v_k$ satisfies the simplex property.
    For any simplex $x$,
    \begin{align*}
    	v_k(f,x) &= \sum_{a \subseteq x} (-1)^{\dim(a)-k} c(f,a) \mu_k(a) \\
        	&= (-1)^{\dim(x)-k} c(f,x) \mu_k(x) + \sum_{a \subseteq \partial x} (-1)^{\dim(a)-k} c(f,a) \mu_k(a) \\
            &= c(f,x) \mu_k(\mathring{x}) + v_k(f,\partial x).
    \end{align*}
    To show that the above equals $c(f,x) v_k(\id,\mathring{x}) + v_k(f,\partial x)$, it suffices to show that $\mu_k(\mathring x) = v_k(\id,\mathring{x})$. 
    Applying equation \eqref{eq:vk_id} to both $x$ and $\partial x$, we obtain
    \begin{equation*}
    	\mu_k(\mathring{x}) 
        = \mu_k(x) - \mu_k(\partial x)
        = v_k(\id, x) - v_k(\id, \partial x) 
        = v_k(\id, \mathring{x}).
    \end{equation*}
    Thus, $v_k$ satisfies the simplex property.
    The continuity property of $v_k$ follows directly from \eqref{eq:vk_id} and continuity of $\mu_k$.
   
   It remains to prove invariance of $v_k$, that is $v_k(\phi \circ f \circ \phi^{-1}, \phi(X)) = v_k(f,X)$ where $\phi$ is any isometry. By \eqref{eq:trace}, we have:
   \[ v_k(\phi\circ f \circ \phi^{-1}, \phi(X)) = \sum_{q=0}^{\dim \phi(X)} (-1)^{q-k} \tr((\phi\circ f \circ \phi^{-1})_{q}W_{q,k}), \]
   where $W_{q,k}$ is the diagonal matrix of intrinsic volumes of simplices of $\phi(X)$. Since $\phi$ is an isometry these are the same as the intrinisic volumes of simplices of $X$, so $W_{q,k} = V_{q,k}$, where $V_{q,k}$ is the diagonal matrix of intrinsic volumes of simplices of $X$. Also since $\phi$ is an isometry we have $\dim(X) = \dim \phi(X)$. By naturality of the chain maps we have $(\phi\circ f\circ \phi^{-1})_q = \phi_q f_q \phi^{-1}_q$, and since $V_{q,k}$ is a diagonal matrix and the trace is similarity invariant we have
\begin{align*} 
v_k(\phi\circ f \circ \phi^{-1}, \phi(X)) &= \sum_{q=0}^{\dim X} (-1)^{q-k} \tr(\phi_q f_q \phi^{-1}_{q}V_{q,k}) 
=  \sum_{q=0}^{\dim X} (-1)^{q-k} \tr(\phi_q f_q V_{q,k} \phi^{-1}_{q} ) \\
&=  \sum_{q=0}^{\dim X} (-1)^{q-k} \tr(f_q V_{q,k}) = v_k(f,X).
\end{align*}
   
\end{proof}

\section{A Hadwiger Theorem}
We now turn to a Hadwiger-style classification theorem for valuations on admissible pairs.

\begin{theorem}\label{thm:HadwigerSimplicial}
Any invariant, continuous valuation $v$ on admissible pairs satisfying the simplex property is a linear combination of the Lefschetz volumes. That is, $v$
 can be written as
	\begin{equation}
		v(f,X) = \sum_{k=0}^{\dim X} a_k v_k(f,X)
	\end{equation}
	for some real constants $a_k$.
\end{theorem}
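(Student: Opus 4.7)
The plan is to reduce the problem to the classical Hadwiger theorem applied to the polyhedral valuation $X \mapsto v(\id, X)$. Using the open-simplex decomposition \eqref{eq:open_sum} together with the equivalent form of the simplex property, $v(f, \mathring{x}) = c(f,x)\, v(\id, \mathring{x})$ (noted in the excerpt just after that property is defined), one immediately obtains
\[
v(f,X) \;=\; \sum_{x \subseteq X} v(f, \mathring{x}) \;=\; \sum_{x \subseteq X} c(f,x)\, v(\id, \mathring{x}).
\]
Thus the entire valuation is determined once $v(\id, \mathring{x})$ is known for every open simplex.

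Next I would verify that $X \mapsto v(\id, X)$ satisfies the hypotheses of the classical Hadwiger theorem. Additivity follows from additivity of $v$ specialized to $f = \id$; Euclidean invariance follows from the invariance property of $v$, since $\phi \circ \id \circ \phi^{-1} = \id$; and convex-continuity is exactly the continuity property. Since every compact convex body is a Hausdorff limit of convex polyhedra, continuity forces a unique extension of $v(\id,-)$ to an invariant continuous valuation on all compact polyconvex subsets of $\real^n$. Hadwiger's theorem then yields constants $a_0, \ldots, a_n$ such that $v(\id, X) = \sum_{k=0}^{n} a_k \mu_k(X)$ for every polyhedron $X \subset \real^n$.

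Applying additivity of $v(\id, -)$ followed by Proposition \ref{prop:int_vol_open} computes the value on an open simplex:
\[
v(\id, \mathring{x}) \;=\; v(\id, x) - v(\id, \partial x) \;=\; \sum_{k=0}^{n} a_k \mu_k(\mathring{x}) \;=\; \sum_{k=0}^{n} a_k (-1)^{\dim(x)-k} \mu_k(x).
\]
Substituting this into the earlier expression for $v(f,X)$ and interchanging the order of summation gives
\[
v(f,X) \;=\; \sum_{k=0}^{n} a_k \sum_{x \subseteq X}(-1)^{\dim(x)-k} c(f,x)\, \mu_k(x) \;=\; \sum_{k=0}^{n} a_k v_k(f,X),
\]
which is the desired form; terms with $k > \dim X$ vanish because $\mu_k$ is zero on every simplex of $X$ in that range, matching the range $k = 0, \ldots, \dim X$ stated in the theorem.

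The step I expect to require the most care is the appeal to the classical Hadwiger theorem: one must justify that continuity on convex polyhedra together with additivity really produces a well-defined invariant continuous valuation on the full polyconvex class, so that Hadwiger's hypotheses are met despite the fact that the present setup works only with polyhedra equipped with fixed triangulations. Once the identity-valuation is classified, the remainder of the argument is essentially bookkeeping via the open-simplex decomposition, the factoring supplied by the simplex property, and the sign relation of Proposition \ref{prop:int_vol_open}.
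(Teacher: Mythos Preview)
Your proposal is correct and follows essentially the same route as the paper: decompose $v(f,X)$ over open simplices, use the simplex property to factor out $c(f,x)\,v(\id,\mathring x)$, apply the classical Hadwiger theorem to the polyhedral valuation $X\mapsto v(\id,X)$ (after checking additivity, invariance, and continuity and extending by density to polyconvex sets), and then translate $\mu_k(\mathring x)$ to $(-1)^{\dim x-k}\mu_k(x)$ via Proposition~\ref{prop:int_vol_open}. Your closing remark about the $k>\dim X$ terms vanishing and your flagged caution about the extension step are both appropriate.
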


Note that the constants $a_k$ in the theorem depend only on $v$, not on $x$.

\begin{proof}

	We write $v(f,X)$ as a sum of values on open simplices of $X$ (by equation \eqref{eq:open_sum}) and apply the simplex property:
	\begin{equation}\label{eq:proofsum1}
		v(f,X) = \sum_{ x \subseteq X} v(f,\mathring x) = \sum_{ x \subseteq X} c(f,x) v(\id,\mathring x).
	\end{equation}
	
	Let $w(X) = v(\id, X)$, which is a real-valued function on polyhedra in $\real^n$.
    Furthermore, we will show that $w(x)$ is additive, Euclidean-invariant, and continuous.

	Additivity of $w$ follows from additivity of $v$.
	For any polyhedra $X$ and $Y$,
	\begin{align*}
		w(X) + w(Y) &= v(\id, X) + v(\id, Y) \\ 
		&= v(\id, X \cap Y) + v(\id, X \cup Y) = w(X \cap Y) + w(X \cup Y).
	\end{align*}
	
	Invariance of $w$ follows from invariance of $v$.
	For any isometry $\phi \in \euc_n$ and any simplex $x \in \real^n$,
	\begin{equation*}
		w(\phi (x)) = v(\id, \phi (x)) = v(\phi \circ \id \circ \phi^{-1}, \phi (x)) = v(\id, x) = w(x).
	\end{equation*}
	
    Continuity of $w$ follows from continuity of $v$.
    Continuity of $v$ implies that $w$ is a Hausdorff-continuous valuation on compact convex polyhedra.
    Since compact convex polyhedra are Hausdorff-dense among compact convex sets in $\real^n$, Hausdorff-continuity allows us to extend $w$ to a valuation on compact convex sets in $\real^n$.
	
    Therefore, $w$ is a Euclidean-invariant valuation on compact polyconvex sets, continuous on convex sets with respect to the Hausdorff metric.
	The classic Hadwiger Theorem then implies that for any  polyhedron $X$,
	\begin{equation}\label{eq:proofsum2}
		w(X) = \sum_{k=0}^{\dim X} a_k \mu_k(X)
	\end{equation}
	for some constants $a_k \in \real$ (that depend only on $w$, not on $X$).
    In particular, equation \eqref{eq:proofsum2} holds when $X$ is a closed simplex $x$ or its boundary $\partial x$.
    We obtain:
    \begin{align}
		v(\id, \mathring x) 
        &= w(\mathring x) 
        = w(x) - w(\partial x) \notag \\
        &= \sum_{k=0}^{\dim x} a_k \mu_k(x) - \sum_{k=0}^{\dim x} a_k \mu_k(\partial x)
        = \sum_{k=0}^{\dim x} a_k \mu_k(\mathring x). \label{eq:proofsum3}
	\end{align}
	
Thus by \eqref{eq:proofsum1} we have
\begin{align*}
v(f,X) = \sum_{ x \subseteq X} c(f,x) \sum_{k=0}^{\dim x} a_k \mu_k(\mathring x) = \sum_{k=0}^{\dim X} a_k \sum_{x \subseteq X} c(f,x) \mu_k(\mathring x)
\end{align*}
where the exchanging of the sums is valid because the $a_k$ depend only on $v$, not on $x$.
Now by Proposition \ref{prop:int_vol_open} the above becomes
\[ v(f,X) = \sum_{k=0}^{\dim X} a_k \sum_{x \subseteq X} c(f,x) (-1)^{(\dim x) - k}\mu_k(x)  = 
\sum_{k=0}^{\dim X} a_k v_k(x)
\]
as desired.
\end{proof}

\section{Examples}



We now give a few explicit computations of the Lefschetz volumes for simple admissible pairs.

\begin{example}\label{ex:nontrivial}
    Consider the following space $X$, illustrated in Figure \ref{fig:nontrivial}, consisting of vertices $p_1,\dots, p_5$, edges $e_1, \dots, e_6$, and a single face $s$ (shaded), where all edges have length 1 and the face has area $\sqrt{3}/4$ (according to the Euclidean metric).
    \begin{figure}[hbt]
        \begin{center}
            \begin{tikzpicture}
                \def\s{2.5}; 
                \path[draw] (150:\s) coordinate [label=left:$p_1$] (v1)
                    -- (0:0) coordinate [label=above:$p_2$] (v2)
                    -- (210:\s) coordinate [label=left:$p_5$] (v5)
                    -- cycle;
                
                \draw[fill=lightgray] (v2)
                    -- (30:\s) coordinate [label=right:$p_3$] (v3)
                    -- (-30:\s) coordinate [label=right:$p_4$] (v4)
                    -- cycle;
                
                \foreach \point in {v1, v2, v3, v4, v5}
                    \fill [black] (\point) circle (2pt);
                    
                \node[above] at (150:\s*0.5) {$e_1$};
                \node[above] at (30:\s*0.5) {$e_2$};
                \node[right] at (\s*0.87,0) {$e_3$};
                \node[below] at (-30:\s*0.5) {$e_4$};
                \node[below] at (210:\s*0.5) {$e_5$};
                \node[left] at (\s*-0.87,0) {$e_6$};
                \node at (0.5*\s,0) {$s$};
            \end{tikzpicture}
        \end{center}
        \caption{Polyhedron $X$ discussed in Example \ref{ex:nontrivial}.}
        \label{fig:nontrivial}
    \end{figure}
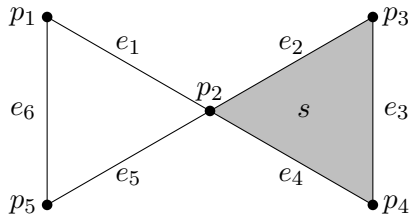
    
    
    Let $f:X\to X$ be the map which is the identity on edges $e_1, e_5$, and $e_6$, and does a vertical flip of the face. The induced maps on chain groups for $f$ are as follows:
    \begin{align*}
    f_0 &= \begin{bmatrix} 
    1 & 0 & 0 & 0 & 0 \\
    0 & 1 & 0 & 0 & 0 \\
    0 & 0 & 0 & 1 & 0 \\
    0 & 0 & 1 & 0 & 0 \\
    0 & 0 & 0 & 0 & 1
    \end{bmatrix} \\
    f_1 &= \begin{bmatrix}
    1 & 0 &  0 & 0 & 0 & 0 \\
    0 & 0 &  0 & 1 & 0 & 0 \\
    0 & 0 & -1 & 0 & 0 & 0 \\
    0 & 1 &  0 & 0 & 0 & 0 \\
    0 & 0 &  0 & 0 & 1 & 0 \\
    0 & 0 &  0 & 0 & 0 & 1
    \end{bmatrix} \\
    f_2 &= \begin{bmatrix} -1 \end{bmatrix}
    \end{align*}
    
    The traces of these matrices are 3, 2, and $-1$. 
    Thus, by formula \eqref{eq:trace}, the Lefschetz number is
    \begin{equation*}
    v_0(f,X) = L(f,X) = 3 - 2 + (-1) = 0.
    \end{equation*}
    
    \sidenote{maybe this part can be improved}
    To compute $v_1(f,X)$, we first remind ourselves of the value $\mu_1$ for each simplex.
    If $p$ is a vertex, $\mu_1(p)=0$ since points have no length.
    For an edge $e$ of unit length, $\mu_1(e)=1$.
    For the face $s$, $\mu_1(s)=3/2$, which is half the perimeter of $s$.
    Thus, 
    \begin{equation*}
    	v_1(f,X) = \sum_{q=0}^2 (-1)^{q-1} \tr(f_q V_{q,1}) = 0 + 2 + \frac{3}{2} = \frac{7}{2}.
    \end{equation*}
    
    Lastly, to compute $v_2$, we consider $\mu_2$.
    Since vertices $p$ and edges $e$ have no area, $\mu_2(p) = \mu_2(e) = 0$.
    The area of the face $s$ is $\mu_2(s) = \sqrt{3}/4$, so we obtain
    \begin{equation*}
    	v_2(f,X) = \sum_{q=0}^2 (-1)^{q-2} \tr(f_q V_{q,2}) = 0 + 0 + (-1)\frac{\sqrt{3}}{4} = -\frac{\sqrt{3}}{4}.
    \end{equation*}
%
\end{example}

\begin{example}\label{ex:zero}
	Let $Y$ be the space, illustrated in Figure \ref{fig:zero}, consisting of vertices $p_1, \ldots, p_6$ and edges as shown.
    The edges connecting vertices $p_1, \ldots, p_4$ are each of unit length, while the edge connecting $p_5$ and $p_6$ has length $5$ (the lengths of the remaining edges are irrelevant for this example).
	\begin{figure}[hbt]
        \begin{center}
            \begin{tikzpicture}
                \def\s{0.7}; 
                
                \path[draw] (0,0) coordinate [label=right:$p_4$] (v4)
                    -- (120:\s) coordinate [label=above:$p_2$] (v2)
                    -- (180:\s) coordinate [label=left:$p_1$] (v1)
                    -- (240:\s) coordinate [label=below:$p_3$] (v3)
                    -- (v4) -- (v1);
                
               \path[draw] (v4)
                    -- (1.5*\s,2.5*\s) coordinate [label=right:$p_5$] (v5)
                    -- (1.5*\s,-2.5*\s) coordinate [label=right:$p_6$] (v6)
                    -- cycle; 
                
                \foreach \point in {v1,v2,v3,v4,v5,v6}
                    \fill [black] (\point) circle (2pt);
                    
            \end{tikzpicture}
        \end{center}
        \caption{Polyhedron $Y$ discussed in Example \ref{ex:zero}.}
        \label{fig:zero}
    \end{figure}
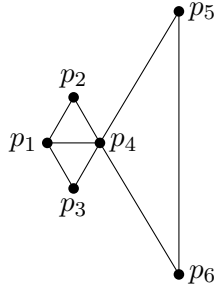
	
    Let $g: Y \to Y$ be the simplicial map that is the identity on $p_1, \ldots p_4$, and interchanges $p_5$ and $p_6$.
    This $g$ fixes four vertices and five edges, and maps one edge to itself with opposite orientation.
    The Lefschetz volume $v_0(g,Y)$ is then
    \begin{equation*}
    	v_0(g,Y) = 4 + 5(-1) -1(-1) = 0.
    \end{equation*}
    
    To compute $v_1(g,Y)$, we must take into account the lengths of the edges.
    Since $g$ fixes the five edges of length $1$ and reverses the edge of length $5$, we find
	\begin{equation*}
    	v_1(g,Y) = 5(1) -1(5) = 0.
    \end{equation*}
    We see that \emph{all} Lefschetz volumes of $(g,Y)$ are zero, and yet $g$ has a fixed point.
    We note that the lengths of the edges are important in this example.
    It is possible to construct a simplicial complex with all edges of unit length and no nonzero Lefschetz volumes.
\end{example}

While the Lefschetz number is independent of the simplicial decomposition of a polyhedron, the other (``higher-dimensional'') Lefschetz volumes are \emph{not} independent of such decomposition, as the following example shows.

\begin{example}\label{ex:decomp}
	Let $X$ be a polyhedron consisting of an edge of length $2$, and let $f: X \to X$ the simplicial map that flips $X$, interchanging its endpoints.
    Let $Y$ be a polyhedron that results from subdividing $X$ by adding a vertex in the center, and let $g: Y \to Y$ be the simplicial map that flips $Y$.
    (See Figure \ref{fig:decomp}.)
    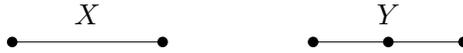
\begin{figure}[hbt]
        \begin{center}
            \begin{tikzpicture}
                \def\s{1}; 
                
                \path[draw] (0,0) coordinate (v1)
                    -- (2*\s,0) coordinate (v2);
                    
                \node[above] at (\s,0.1) {$X$};
                    
				\path[draw] (4*\s,0) coordinate (v3)
                    -- (5*\s,0) coordinate (v4)
                    -- (6*\s,0) coordinate (v5);
                
                \node[above] at (5*\s,0.1) {$Y$};
                
                \foreach \point in {v1,v2,v3,v4,v5}
                    \fill [black] (\point) circle (2pt);
                    
            \end{tikzpicture}
        \end{center}
        \caption{Polyhedra discussed in Example \ref{ex:decomp}.}
        \label{fig:decomp}
    \end{figure}
    
    Observe that $v_1(f,X)=-2$ because one edge of length $2$ is mapped to itself with opposite orientation.
    However, $v_1(g,Y)=0$ because no edges are mapped to themselves.
    Thus, $v_1$ is not invariant with respect to simplicial decomposition.
    This contrasts with the situation for Lefschetz numbers; indeed, $v_0(f,X) = v_0(g,Y) = 1$.
\end{example}



\section{Further Work}

While the Lefschetz number is invariant with respect to both homotopy and simplicial decomposition, the other Lefschetz volumes do not possess such invariance. It would be interesting to determine if there is some other, analogous, conception of invariance satisfied by the Lefschetz volumes.

It is desirable to extend the valuation theory described in this paper to continuous self-maps of topological spaces, as is possible for the Lefschetz number.
However, since the Lefschetz volumes depend on the simplicial decomposition of polyhedra (see Example \ref{ex:decomp}), it is unclear how to approach continuous maps via simplicial approximations.

It also remains to study valuations that do not satisfy the simplex property, or that satisfy other properties. The authors suspect that there are more settings, not yet considered, in which Hadwiger-type theorems can be proved.
\sidenote{say more here?}

\section{Acknowledgements}

The authors thank Vin de Silva for his helful comments, including a simplified proof of Lemma \ref{lem:open_decomp}. The second author gratefully acknowledges the support of the Institute for Mathematics and its Applications (IMA), where the author was a postdoctoral fellow during the IMA's annual program on applications of algebraic topology.


\bibliographystyle{amsalpha}

\begin{thebibliography}{99}
	\bibitem{AB} M. Arkowitz and R. F. Brown, ``The Lefschetz-Hopf theorem and axioms for the Lefschetz number,'' \emph{Fixed Point Theory and Applications}, 1 (2004), pp. 1--11.
	
    \bibitem{BG:pnas} Y. Baryshnikov and R. Ghrist, ``Euler integration over definable functions,'' {\em Proc. Nat. Acad. Sci.},  107(21), May 25, 9525-9530, 2010.
    
    \bibitem{BG:siam} Y. Baryshnikov and R. Ghrist, ``Target enumeration via Euler characteristic integration,'' {\em SIAM Journal on Applied Mathematics},  70(3), 2009, 825--844.
    
	\bibitem{BGW} Y. Baryshnikov, R. Ghrist, and M. Wright, ``Hadwiger's Theorem for Definable Functions,'' \emph{Advances in Mathematics}, vol. 245,  October 2013, pp. 573-586.
	
	\bibitem{Brown} R. F. Brown, ``The Lefschetz Fixed Point Theorem,'' Scott, Foresman and Company, 1971.
    
	
    \bibitem{Hadwiger} H. Hadwiger, ``Integrals\"atze im Konvexring,'' {\em Abh. Math. Sem. Hamburg}, 20, 1956, 136--154.
    
	\bibitem{KR} D. A. Klain and G.-C. Rota, {\em Introduction to Geometric Probability}, Cambridge, 1997.
	
	\bibitem{Schanuel} S. H. Schanuel, ``What is the Length of a Potato?'' {\em Lecture Notes in Mathematics}, Springer, 1986, 118--126.
	
    \bibitem{Schneider} R. Schneider, {\em Convex Bodies: The Brunn-Minkowski Theory}, Cambridge, 1993.
    
	\bibitem{Staecker} P. Christopher Staecker, ``Axioms for the Lefschetz Number as a Lattice Valuation,'' {\em Advances in Fixed Point Theory}, to appear (\texttt{arXiv:1307.2131}).
	
    \bibitem{VDD} L. Van den Dries, {\em Tame Topology and O-Minimal Structures}, Cambridge, 1998.
    
	\bibitem{Watts} C. Watts, ``On the Euler characteristic of polyhedra.'' \emph{Proceedings of the American Mathematical Society}, 13 (1962) pp. 304--306.
\end{thebibliography}

\end{document}